\newtheorem{theorem}{Theorem}[section]
\newtheorem{lemma}[theorem]{Lemma}
\newtheorem{proposition}[theorem]{Proposition}
\newtheorem{remark}[theorem]{Remark}
\def\square{\hbox{\vrule\vbox{\hrule\phantom{o}\hrule}\vrule}}
\def\R{\mathbb {R}}
\def\C{\mathbb {C}}
\def\N{\mathbb {N}}
\def\tendsto{\rightarrow}
\def\re{\mathop{\rm Re}\nolimits}
\def\im{\mathop{\rm Im}\nolimits}
\def\la{\langle}
\def\ra{\rangle}
\def\O{\mathcal O}
\newcommand{\be}{\begin{equation}}
\newcommand{\ee}{\end{equation}}
\numberwithin{equation}{section}
\begin{document}

\title[Lower bound of resonances for Helmhotz resonator]{Optimal lower bound of the resonance widths for a Helmhotz tube-shaped resonator}
 
\begin{abstract}
The study of the resonances of the Helmholtz resonator has been broadly described in previous works \cite{HM}. Here, for a simple tube-shaped two dimensional resonator, we can perform a careful analysis of the transition zone where oscillations start to appear. In that way, we obtain an optimal lower bound of the width of the resonances.
\end{abstract}

\author{Andr\'e Martinez${}^1$  \& Laurence N\'ed\'elec${}^2$}

\subjclass[2000]{Primary 81Q20 ; Secondary 35P15, 35B34}
\keywords{Helmholz resonator, quantum resonances, lower bound}

\maketitle

\addtocounter{footnote}{1}

\footnotetext{Universit\`a di Bologna, Dipartimento di
Matematica, Piazza di Porta San Donato 5, 40127 Bologna,
Italy. Partly supported by Universit\`a di Bologna, Funds
for Selected Research Topics and Founds for Agreements with
Foreign Universities}
\addtocounter{footnote}{1}
\footnotetext{Stanford University, Department of Mathematics, building 380, Stanford, California 94305}

\setcounter{section}{0}

\section{Introduction}

Historically, the Helmholtz resonator was conceived and built by Herman von Helmholtz in order to study  vibrations and their receiving  by human beings.

From a physical point of view, it consists of a bounded cavity (the chamber) connected to the exterior by a thin tube (the neck of the chamber). When air is forced into the chamber through the aperture of the neck, a vibrational procedure (a sound) starts due to pressure changes, with a frequency dependent on the shape of the chamber.

Mathematically, this phenomenon is described by the resonances of the Dirichlet Laplacian $-\Delta_\Omega$ on the domain $\Omega$ consisting in the union of the chamber, the neck and the exterior (see Figure  \ref{fig1}).

\begin{figure}[h]
\label{fig1}

\vspace*{0ex}
\centering
\includegraphics[width=0.6\textwidth,angle=0]{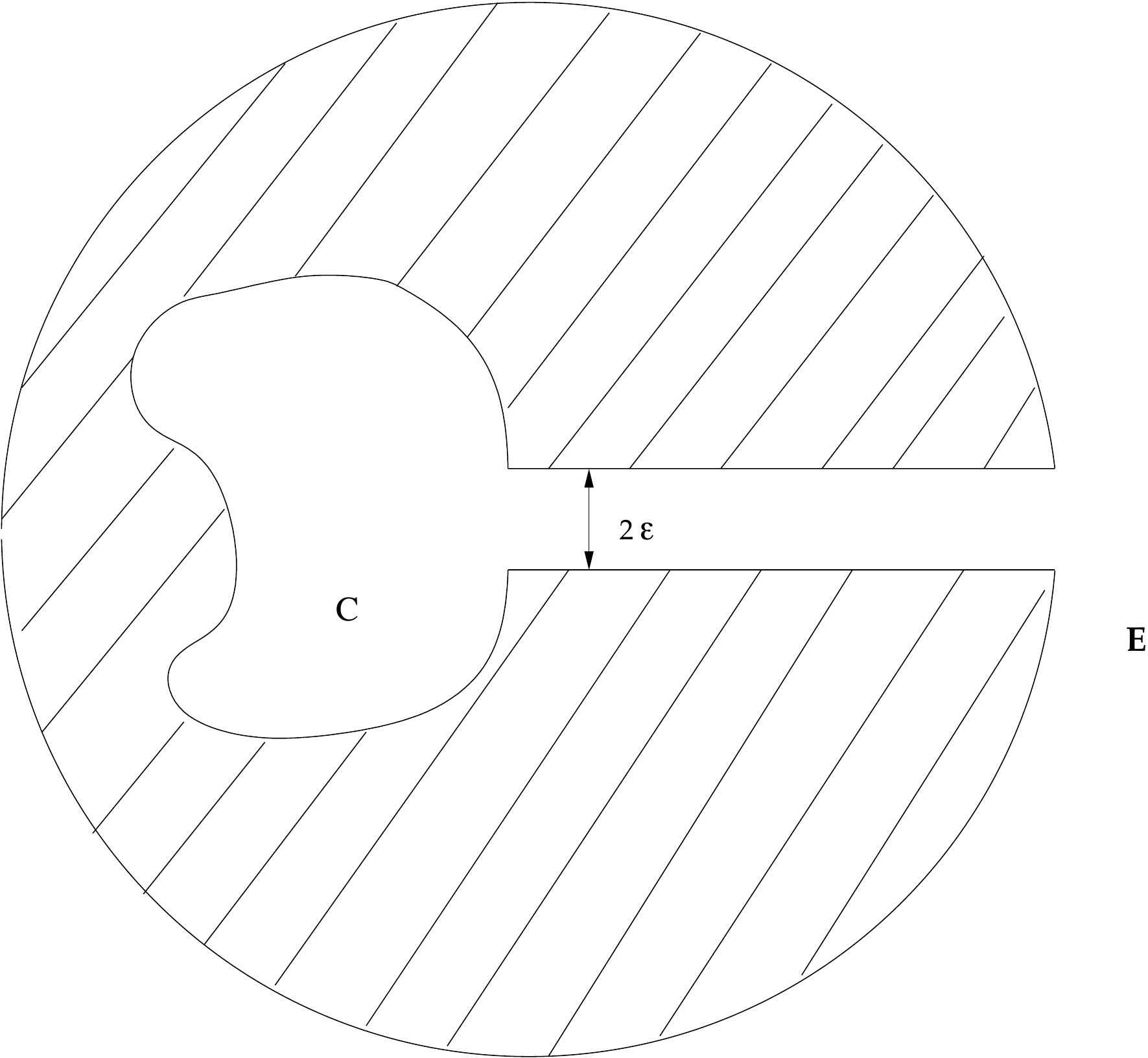}

\vspace*{0ex}
\caption{The Helmholtz resonator}
\end{figure}

More precisely, the resonances are defined as the eigenvalues of a complex deformation of $-\Delta_\Omega$;  their real part corresponds to the frequency, while their imaginary part corresponds to the inverse of the half-life of the vibrational mode.
Therefore, it is of physical interest to compute as precisely as possible both of these quantities. 

An efficient approach consists in an asymptotic study of this problem when the width $\varepsilon$ of the neck is  arbitrarily small. Indeed, in this situation, the work \cite{HM} shows that the frequencies are closed those of the chamber (that is, to the real eigenvalues of the Dirichlet Laplacian on the cavity), and gives an exponentially small upper bound on the absolute values of the imaginary part (width) of the resonances. However, no lower bound is known in the general situation.

Here, we focus on obtaining such a lower bound (of the same order of magnitude as the corresponding upper bound), in the particular case of a two dimensional tube-shaped resonator (see figure 2).

\begin{figure}[h]
\label{fig:es}
\vspace*{0ex}
\centering
\includegraphics[width=0.75\textwidth,angle=0]{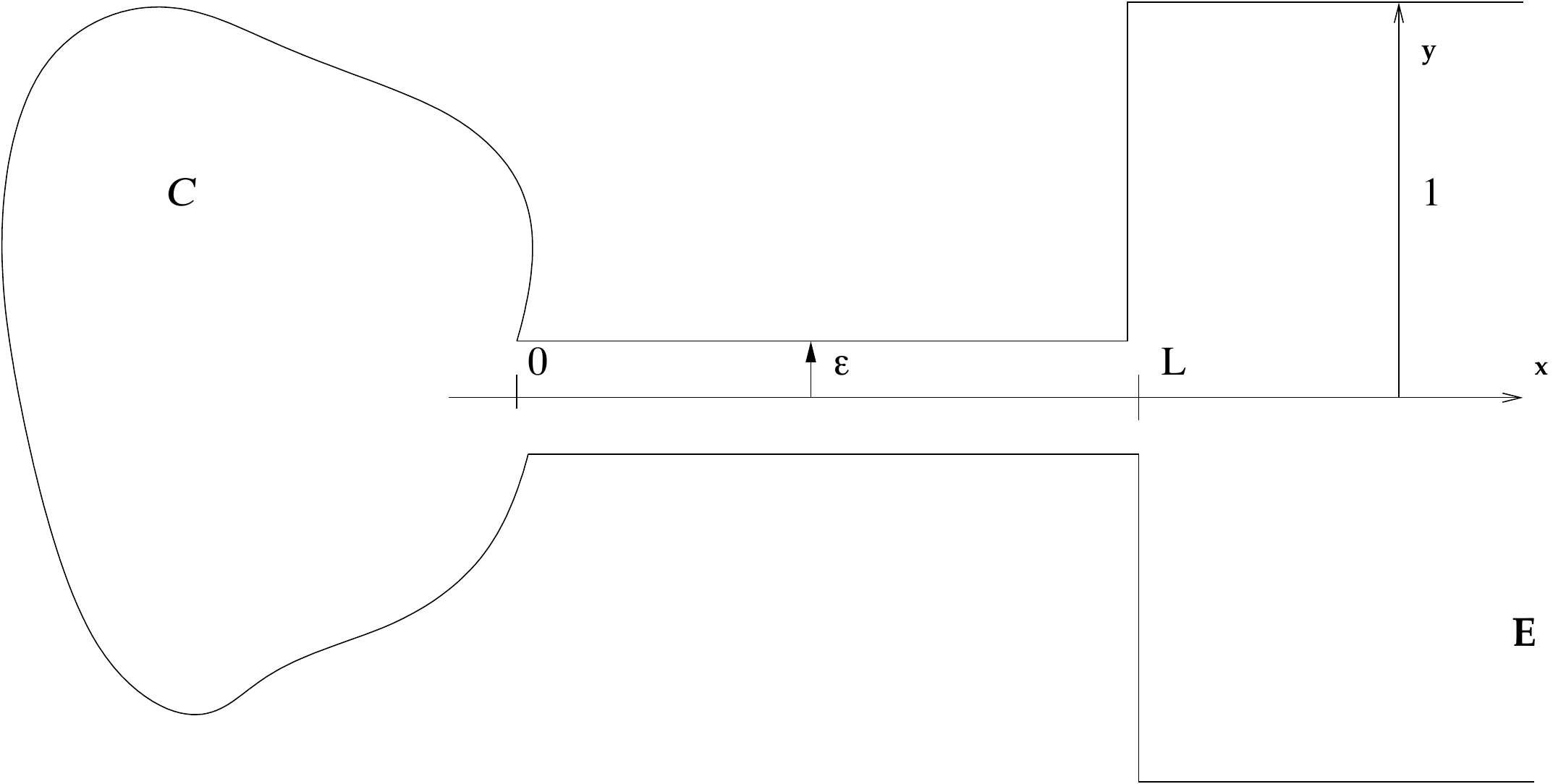}
\vspace*{-20ex}
\vspace {3cm}
\caption{The tube-shaped resonator}
\end{figure}

To our knowledge, and apart from the one-dimensional case (see, e.g., \cite{Ha, FL}), the only available results on  lower bounds concerning exponentially small widths of resonances are \cite{Bu, HS}, and only \cite{HS} is optimal (see also \cite{FLM} for some generalization).

\section{Geometrical description and results}
In $\R^2$, we consider a Helmholtz resonator consisting of a regular bounded open set  $\mathcal C$ (the cavity), connected to an unbounded domain $\mathbf E$ through a thin straight tube ${\mathcal T}(\varepsilon)$ (the neck), where $\varepsilon >0$ is a small parameter  which equals half the diameter of the tube (see figure 2).

More precisely, we assume that the Euclidean coordinates $(x,y)$ of $\R^2$ can be chosen in such a way that, for some $L, \delta >0$  independent of $\varepsilon$, one has,
$$ \begin{array}{l}
{\mathbf E}= (L, +\infty)\times (-1,1); \\
0\in \partial{\mathcal C} ;\\
 \overline{\mathcal C}\cap \left( ( [0,L]\times\{0\})\, \cup  \,\overline{\mathbf E}\right)= \emptyset;\\
{\mathcal T}(\varepsilon)=\left([-\delta, L]\times (-\varepsilon, \varepsilon)\right) \cap \left( \R^2\backslash {\mathcal C}\right).\end{array}
$$

Here, $\partial{\mathcal C}$  stands for the  boundary of ${\mathcal C}$.

In particular, as $\varepsilon \tendsto 0_+$, the resonator $\Omega(\varepsilon):={\mathcal C}\cup{\mathcal T}(\varepsilon)\cup{\mathbf E}$ collapes into $\Omega_0:={\mathcal C}\cup [0,M_0]\cup{\mathbf E}$, with $M_0=(L,0)$. 

Let $P_\varepsilon=-\Delta_{\Omega(\varepsilon)}$ be the Dirichlet Laplacian on $\Omega (\varepsilon)$. 
\par\noindent Let also $P_{\mathcal{C}(\varepsilon)}=-\Delta_{\mathcal{C}\cup \mathcal{T}(\varepsilon)}$ be the Dirichlet Laplacian on $\mathcal{C}(\varepsilon):=\mathcal{C}\cup \mathcal{T}(\varepsilon)$. \par\noindent  Finally, let $-\Delta_{\mathcal{C}}$ and  $-\Delta_{\mathbf E}$ be the Dirichlet Laplacian on $\mathcal{C}$ and $\mathbf E$, respectively. 

In this situation, the resonances of $P_\varepsilon$ are defined as the eigenvalues of the operator obtained by distorsion of  $P_\varepsilon$ in the complex plane with respect to the  coordinate $x$, for $x>L$ large enough.

We are interested in those resonances of $P_\varepsilon$ that are close to the eigenvalues of $-\Delta_{\mathcal C}$.

So, let $\lambda_0$ be an eigenvalue of $-\Delta_{\mathcal{C}}$, and let  $u_0$ be the corresponding (normalized) eigenfunction. For $k\geq 1$ integer, we set,
\be
\label{alfak}
\alpha_k:=k\pi/2;
\ee
the quantities $\alpha_k^2$ ($k\geq 1$) correspond to the thresholds of $-\Delta_{\mathbf E}$.

We assume,
\begin{itemize}
\item[] $\lambda_0$ is simple;
\item[\bf (H)\hskip 0.5cm] $\lambda_0>\alpha_1^2$ ; $\lambda_0\not= \alpha_k^2$ for all $k\geq 1$;
\item[] $u_0$ does not vanish on ${\mathcal C}$ near the point $(0,0)$.
\end{itemize}

Note that the first and  last property is automatically satisfied when $\lambda_0$ is the first eigenvalue of  $-\Delta_{\mathcal{C}}$. When $\lambda_0$ is a higher eigenvalues, it just means that $0$ is not on a nodal line of $u_0$.

Then, by the same arguments as in  \cite{HM}, we know that there is a resonance $\rho(\varepsilon)\in\mathbb{C}$ of  $P_\varepsilon$ such that $\rho(\varepsilon)\to \lambda_0$ as $\varepsilon\to 0$, and there is an eigenvalue $\lambda(\varepsilon)$ of  $P_{\mathcal{C}(\varepsilon)}$ such that, for all $\delta>0$, there is $C_\delta>0$, with, 
\be
|\rho(\varepsilon)-\lambda(\varepsilon)|\leq C_\delta e^{-\pi(1-\delta)L/\varepsilon},
\ee
for all $\varepsilon>0$ small enough.

 In particular, since $\lambda (\varepsilon)$ is real, an immediate consequence is,
\be
\label{upperbound}
|\im \rho(\varepsilon)|\leq C_\delta e^{-\pi(1-\delta)L/\varepsilon}.
\ee
 Here, we prove,
 \begin{theorem}\sl 
 \label{mainth}
 Under Assumption {\bf(H)}, there exists $N_0 >0$ such that, for all $\varepsilon >0$ small enough, one has,
 $$
 |\im \rho(\varepsilon)|\geq  \varepsilon^{N_0}e^{-\pi L/\varepsilon}.
 $$
 \end{theorem}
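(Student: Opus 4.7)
My approach uses the eigenfunction $u_\varepsilon$ of $P_{\mathcal{C}(\varepsilon)}$ associated with $\lambda(\varepsilon)$ (from the construction recalled above) as a quasi-mode for the resonance problem, and expresses $|\im\rho(\varepsilon)|$ as the square of the coupling amplitude between the neck and the propagating modes of $\mathbf{E}$. The overall scheme parallels the upper bound in \cite{HM}; the new ingredient is a matching \emph{lower} bound on this coupling amplitude.

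First, I would derive a flux formula. Taking $\chi$ a smooth cutoff supported in $\mathcal{C}(\varepsilon)$ and equal to $1$ away from a neighborhood of $\{L\}\times[-\varepsilon,\varepsilon]$, a Green identity applied to $\chi u_\varepsilon$ (extended by $0$ to $\Omega(\varepsilon)$) and to the exact resonant state $\psi_\varepsilon$ of the complex-distorted $P_\varepsilon$ yields
\[
(\rho(\varepsilon)-\lambda(\varepsilon))\,\langle \chi u_\varepsilon,\overline{\psi_\varepsilon}\rangle = \langle [-\Delta,\chi]u_\varepsilon,\overline{\psi_\varepsilon}\rangle.
\]
Taking imaginary parts and exploiting the outgoing character of $\psi_\varepsilon$ in $\mathbf{E}$, one obtains, modulo errors smaller than the target,
\[
|\im\rho(\varepsilon)|\,\gtrsim\, \sum_{k:\,\alpha_k^2<\lambda_0}\sqrt{\lambda_0-\alpha_k^2}\,|c_k(\varepsilon)|^2,
\]
where $c_k(\varepsilon)$ is the amplitude of the $k$-th outgoing mode $\sin(k\pi(y+1)/2)$ in the asymptotic expansion of $\psi_\varepsilon$ in $\mathbf{E}$. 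Hypothesis $\mathbf{(H)}$ guarantees $\lambda_0>\alpha_1^2$, so at least one such mode propagates.

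Second, I would perform a transverse mode analysis of $u_\varepsilon$ in the neck. Writing $u_\varepsilon(x,y)=\sum_k u_k(x)\phi_k^\varepsilon(y)$ with $\phi_k^\varepsilon(y)=\varepsilon^{-1/2}\sin(k\pi(y+\varepsilon)/(2\varepsilon))$, the first-mode coefficient takes the form $u_1(x)=A_\varepsilon\sinh(\mu_\varepsilon(L-x))$ with $\mu_\varepsilon=\pi/(2\varepsilon)+O(\varepsilon)$, the Dirichlet condition at $x=L$ (inherited from $\partial\mathcal{C}(\varepsilon)$) fixing the hyperbolic sine. Matching at the cavity end, where $u_\varepsilon\approx u_0$, combined with the Hopf-type consequence $\partial_\nu u_0(0,0)\neq 0$ of $\mathbf{(H)}$ (since $u_0$ is Dirichlet on $\partial\mathcal{C}$ but does not vanish in $\mathcal{C}$ near $(0,0)$), yields $|u_1(0)|\gtrsim\varepsilon^{M_1}$ for some $M_1$ independent of $\varepsilon$, and hence $|A_\varepsilon|\gtrsim\varepsilon^{M_1}e^{-\pi L/(2\varepsilon)}$.

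The delicate step is the transition-zone analysis: rigorously computing $c_k(\varepsilon)$ from $A_\varepsilon$ via matched asymptotics in a neighborhood of $\{L\}\times[-\varepsilon,\varepsilon]$. On the inner scale $\psi_\varepsilon$ is close to a combination of $u_\varepsilon$ and an auxiliary solution realising the free boundary condition at the neck exit; on the outer scale it is a sum of propagating and evanescent transverse modes of $\mathbf{E}$. Matching gives, at leading order, $c_k(\varepsilon)\sim A_\varepsilon\,\mu_\varepsilon\,\varepsilon^{3/2}\sin(k\pi/2)$, nonzero for odd $k$, hence $|c_k(\varepsilon)|\gtrsim\varepsilon^{M_1+1/2}e^{-\pi L/(2\varepsilon)}$ and $|\im\rho(\varepsilon)|\gtrsim\varepsilon^{2M_1+1}e^{-\pi L/\varepsilon}$, which is the desired bound with $N_0=2M_1+1$. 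The non-resonance condition $\lambda_0\neq\alpha_k^2$ from $\mathbf{(H)}$ intervenes here to invert a transverse resolvent uniformly in $\varepsilon$, ensuring the matching is non-degenerate. I expect this transition-zone analysis --- the passage from the evanescent regime inside the neck to the oscillatory regime in $\mathbf{E}$ mentioned in the abstract --- to be the main obstacle; by contrast, the separation of variables in the neck and the Green-formula flux identity are essentially available from \cite{HM}-type techniques.
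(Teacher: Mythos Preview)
Your overall strategy differs from the paper's in an important way: you work with the eigenfunction of $P_{\mathcal{C}(\varepsilon)}$ as a quasi-mode and try to extract $\im\rho$ from a commutator identity, whereas the paper works directly with the resonant state $u_\varepsilon$ of $P_\varepsilon$ throughout. The paper's flux identity is simply Green's formula for $u_\varepsilon$ on $\mathcal{C}\cup\mathcal{T}(\varepsilon)\cup((L,L+1)\times(-1,1))$, giving $\im\rho\approx\im\int_{-1}^1 u_\varepsilon(L+1,y)\,\partial_x\bar u_\varepsilon(L+1,y)\,dy$; expanding $u_\varepsilon$ in the exterior modes then yields $|\im\rho|\gtrsim\sum_{j\le j_0}|b_j|^2$ directly. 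Your identity $(\rho-\lambda)\langle\chi u_\varepsilon,\bar\psi_\varepsilon\rangle=\langle[-\Delta,\chi]u_\varepsilon,\bar\psi_\varepsilon\rangle$ does not obviously produce the same conclusion: the commutator is supported inside the neck, where $\psi_\varepsilon$ is to leading order real and exponentially decaying, so taking imaginary parts does not transparently isolate the outgoing amplitudes $|c_k|^2$.

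More importantly, the transition-zone step is where the entire difficulty lies, and you have only stated a heuristic outcome $c_k\sim A_\varepsilon\,\mu_\varepsilon\,\varepsilon^{3/2}\sin(k\pi/2)$ without an argument. The paper does \emph{not} prove an asymptotic equivalence of this kind. Instead it expands the resonant state as series on both sides of the aperture, then computes the three scalar products $\langle u_\varepsilon,\partial_x u_\varepsilon\rangle$, $\langle u_\varepsilon,\varphi_1\rangle$, $\langle\partial_x u_\varepsilon,\psi_1\rangle$ in two ways each, combining them into the one-sided inequality $|A_{1,-}|\le C\varepsilon^{-1/2}\sum_{j\le j_0}|b_j|+Ce^{-(\pi L+\delta)/\varepsilon}$. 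The argument hinges on controlling the infinite tails of both series and closes only because two explicit numerical constants $\tau_1,\tau_2$ (built from sums and integrals such as $\sum_{k\ge2}k^{-2}$ and $\int_{\gamma_0}^\infty t(t^2-1)^{-2}dt$) satisfy $\tau_1^2+\tau_2^2<4$; this is a hard-analysis estimate, not a formal matched-asymptotics computation, and it is not clear that a uniform resolvent bound coming from $\lambda_0\ne\alpha_k^2$ suffices to replace it. The lower bound on $|A_{1,-}|$ is then supplied by the tube estimate of \cite{BHM}, which plays the role of your Hopf-type step, and the three pieces combine to give the theorem. Your plan is reasonable in spirit, but the step you correctly flag as ``the main obstacle'' is in fact the heart of the proof, and as written your proposal does not contain an argument for it.
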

 \begin{remark}\sl
Following carefully the proof, one can see that  it is sufficient to take any  $N_0 >10.$
 \end{remark}
 The strategy of the proof  follows:
 \begin{itemize}
 \item By Green's formula, we  reduce to finding a lower-bound estimate on the resonant state $u_\varepsilon$ in the exterior domain $\mathbf E$;
 \item We find a representation of $u_\varepsilon$ by means of series on both sides of the aperture $\{L\}\times[-\varepsilon, \varepsilon]$;
 \item By matching the two representations  at the aperture, we  reduce to finding a lower-bound estimate on  $u_\varepsilon$ inside the neck ${\mathcal T}(\varepsilon)$;
 \item Then, using an argument from \cite{BHM}, the required estimate is deduced from an estimate on $u_0$ near $(0,0)$.
 \end{itemize}

\section{Properties of the resonant state}

By definition, the resonance $\rho (\varepsilon)$ is an eigenvalue of the complex distorted operator,
$$
P_\varepsilon (\mu):= U_\mu P_\varepsilon U_\mu^{-1},
$$
where $\mu>0$ is a small enough parameter, and $U_\mu$ is a complex distortion of the form,
$$
U_\mu \varphi (x,y):= \varphi (x+i\mu f(x), y),
$$
with $f\in C^\infty (\R)$, $f(x)= 0$ if $x\leq L+1$, $f(x) =x$ for $x$ large enough. (Observe that by the Weyl perturbation theorem, the essential spectrum of $P_\varepsilon (\mu)$ consists in the union of the half-lines $e^{-2i\theta}[\alpha_k^2, +\infty)$ ($k\geq 1$), with $\theta =\arctan\mu$.)

It is well known that such eigenvalues do not depend on $\mu$ (see, e.g., \cite{SZ, HeM}), and that the corresponding eigenfunctions are of the form $U_\mu u_\varepsilon$ with $u_\varepsilon$ independent of $\mu$, smooth on $\R^2$ and analytic with respect to $x$ in a complex sector around $\R$. Moreover, $u_\varepsilon$ can be normalized by setting, for some fixed $\mu>0$,
$$
\Vert U_\mu u_\varepsilon\Vert_{L^2(\Omega(\varepsilon )} =1.
$$

In that case, we learn from \cite{HM} (in particular Proposition 3.1 and formula (5.13)),
\be\label{ugrand}
\|u_\varepsilon\|_{L^2(\mathcal{C}\cup \mathcal{T}(\varepsilon)\cup (L,L+1)\times (-1,1))}\geq 1- \O(e^{(\delta-\pi/2  L)/\varepsilon}),
\ee and 
\be
\label{upetit}
\|u_\varepsilon\|_{H^1( (L,L+1)\times (-1,1))}=  \O(e^{(\delta-\pi/2  L)/\varepsilon}).
\ee


Now, using the equation $-\Delta u_\varepsilon = \rho u_\varepsilon$ and the Green formula on the domain $\mathcal{C} \cup \left(  [L,L+1)\times (-1,1)\right)$, we obtain,

$$
\im \int_{-1}^1 u(L+1,y)\frac {\partial\overline u}{\partial x}(L+1,y)dy =\im\rho \int_{\mathcal{C}\cup \left ( [L,L+1)\times (-1,1)\right)  } |u|^2dxdy,
$$
and thus, by (\ref{ugrand}),
\be
 \label{green}
\im\rho =(1+\O(e^{(\delta-\pi/  L)/\varepsilon}))\,\,\im \int_{-1}^1 u(L+1,y)\frac {\partial\overline u}{\partial x}(L+1,y)dy .
\ee
 Therefore, in order to prove our result, it will be sufficient for us to obtain a lower bound on $\im \int_{-1}^1 u(L+1,y)\frac {\partial\overline u}{\partial x}(L+1,y)dy$. Note that, by using (\ref{upetit}), we immediately obtain (\ref{upperbound}).
 
\section{Representation in the thin tube}

Let $(\psi_k)_{k\geq 1}$ be an orthonormal basis of  eigenvectors of the Dirichlet realization of $-d^2/dy^2$ on $L^2(-\varepsilon, \varepsilon)$, and let $\alpha_k^2/{\varepsilon^2}$ be the corresponding eigenvalues. More precisely, for $k\geq 1$, we set,

\begin{eqnarray}
\label{psik}
&& \psi_{2k-1}(y):=\frac1{\sqrt \varepsilon}\cos(\alpha_{2k-1}\frac y \varepsilon);\\
&& \psi_{2k}(y):= \frac 1 {\sqrt \varepsilon} \sin( \alpha_{2k} \frac y \varepsilon).\nonumber
\end{eqnarray}

We also set,
$$
\theta_k:=\sqrt{\alpha_k^2-\varepsilon^2\rho(\varepsilon)},
$$
where $\sqrt{\cdot}$ stands for the principal square root, and we denote by $u_\varepsilon =u_\varepsilon (x,y)$ the resonant state of $P_\varepsilon$ corresponding to the resonance $\rho (\varepsilon)$, that is, the outgoing solution of the Dirichlet problem,
$$\left\{\begin{array}{c}
-\Delta u_\varepsilon = \rho (\varepsilon ) u_\varepsilon \mbox{ in } \Omega(\varepsilon);\\
u\left\vert_{\partial\Omega(\varepsilon)} \right. =0.
\end{array}\right.
$$

Then, for any $x\in (0, L)$, and for $\varepsilon$ small enough, we can expand  $u_\varepsilon (x,y)$ on the basis $(\psi_k(y))_{k\geq 1}$, 
$$
u_\varepsilon (x,y)=\sum_{k\geq 1} u_k(x) \psi_k(y),
$$
with,
$$
u_k(x):= \int_{-\varepsilon}^\varepsilon u_\varepsilon (x,y)\psi_k(y)dy.
$$
In particular, $u_k$ satisfies,
$$
\varepsilon^2 u''_k(x)=\theta_k^2u_k,
$$ 
and thus,  there exist $a_{k,+}, a_{k,-}\in\C$, such that, 
$$
 u_k(x)=a_{k,+}e^{\theta_k x/\varepsilon}+a_{k,-} e^{-\theta_k x/\varepsilon}.
$$
Therefore, we have proved that, for $x\in (0, L)$ and $\varepsilon$ small enough,
\begin{equation}\label{letube} 
u_\varepsilon(x,y)= \sum_{k=1}^{\infty} (a_{k,+} e^{\theta_k x/\varepsilon}+a_{k,-} e^{-\theta_k x/\varepsilon}) \psi_k(y),
\end{equation}
where the sum converges in $H^2((\delta, L-\delta)\times (-\varepsilon, \varepsilon))$ for any $\delta >0$.
Differentiating this identity with respect to $x$,  we also obtain, 
\begin{eqnarray}\label{letubederive} 
\frac {\partial u_\varepsilon}{\partial x} (x,y)=\frac 1 {\varepsilon}  \sum_{k=1}^{\infty} \theta_k( a_{k,+} e^{\theta_k x/\varepsilon}-a_{k,-} e^{-\theta_k x/\varepsilon}) \psi_k(y).
\end{eqnarray}

\section{Representation in the external tube}

In the same way as in the previous section, setting,

\begin{eqnarray}
\label{phik}
&& \varphi_{2j-1}(y):=\cos(\alpha_{2j-1}y);\\
&& \varphi_{2j}(y):=  \sin( \alpha_{2j} y),\nonumber
\end{eqnarray}
we can represent $u_\varepsilon$ in $\mathbf E$ as,
$$
u_\varepsilon (x,y) = \sum_{j\geq 1}v_j (x)\varphi_j (y),
$$
where the $v_j$'s satisfy
$$
v_j'' = (\alpha_j^2 - \rho (\varepsilon))v_j,
$$
and are thus of the form,
\be
\label{vj}
v_j(x) = b_{j,+} e^{(x-L)\sqrt{\alpha_j^2 - \rho } } + b_{j,-}e^{- (x-L)\sqrt{\alpha_j^2 - \rho } },
\ee
with $b_{j,\pm}\in \C$.
Note that, for $\im \rho <0$, our choice of the square root imposes that $\im \sqrt{\alpha_j^2 - \rho } \, >0$. 

By Assumption{\bf (H)},  there exists $j_0\geq 1$ such that,
 \be
 \label{hyplambda}
\alpha_{j_0}^2 < \lambda_0 <\alpha_{j_0+1}^2.
 \ee
In particular, for $\varepsilon$ small enough, $\alpha_{j_0}^2 <\re\rho(\varepsilon) <\alpha_{j_0+1}^2$. Moreover, by definition $u_\varepsilon$ must be out-going, and in our case this means that, for $\mu >0$ sufficiently small (but independent of $\varepsilon$), the distorted function $u_\varepsilon ((1+i\mu)x, y)$ is in $L^2([2L,+\infty)\times (-1,1))$. Therefore, in view of (\ref{vj}) and (\ref{hyplambda}), we necessarily have,
$$
b_{j,-} =0\;\mbox{for}\;(j\leq j_0)\quad b_{j,+}=0\; \mbox{for}\; j\geq j_0+1,
$$
that is, for $x>L$, $u_\varepsilon (x,y)$ can be re-written as,
\be
\label{uext}
u_\varepsilon (x,y) =\sum_{j\leq j_0} b_j e^{i(x-L)\sqrt{\rho -\alpha_1^2 } }\varphi_1(y) +\sum_{j\geq j_0+1}b_je^{- (x-L)\sqrt{\alpha_j^2 - \rho } }\varphi_j (y),
\ee
where, for any $L'>L$,  the series is absolutely convergent in $H^2((L,L')\times (-1,1))$.
 
 \section{Representation at the aperture}

Now, let us consider the trace $u_\varepsilon (L,y)$ of $u_\varepsilon$ on $\{ x=L\}$ (note that $u_\varepsilon$ is continuous on $\Omega(\varepsilon)$, and thus its trace is a well defined continuous function on $(-\varepsilon, \varepsilon)$). 

Since $u_\varepsilon\in H^2([L, L']\times[-1,1])$ ($L'>L$ arbitrary), and the part $\{(L,y)\,;\, |y|<1\}$ of the boundary of $[L, L']\times[-1,1]$ is smooth, by standard results (see, e.g., \cite{A,CP}), we know that $u_\varepsilon (L,y)$ is in $H^{3/2}_{\rm loc} (-1,1)$. But since it vanishes identically on $\{ \varepsilon <|y|<1\}$, we conclude,
\be
\label{H3demi}
u_\varepsilon (L,y)\in H^{3/2}([-1,1]).
\ee
On the other hand, on $\{ |y|<\varepsilon\}$, $u_\varepsilon (L,y)$ can be decomposed on the basis $(\psi_k)_{k\geq 1}$ as,
$$
u_\varepsilon (L,y)=\sum_{k\geq 1}C_k\psi_k(y),
$$
where the  $\{ C_k\}\in \ell^2(\N)$. Moreover, since $u_\varepsilon (L,\pm\varepsilon)=0$, if we denote by $\mathcal L(D_y)$ the Dirichlet realization of $-d^2/dy^2$ on $(-\varepsilon, \varepsilon)$, (\ref{H3demi}) implies,
$$
(\mathcal{L}(D_y)+1)^{3/4}u_\varepsilon (L,y) \in L^2(-\varepsilon, \varepsilon),
$$
and thus, using the fact that $\mathcal{L}(D_y)\psi_k = \alpha_k^2\psi_k$, and $\alpha_k\sim k$ as $k\to \infty$, we easily conclude,
\be
\label{Ckl3demi}
\sum_{k=1}^\infty k^3|C_k|^2 <\infty.
\ee
Now, with the notations of (\ref{letube}), we prove 
\begin{lemma}\sl
For all $k\geq 1$, one has,
$$
C_k = a_{k,+} e^{\theta_k L/\varepsilon}+a_{k,-} e^{-\theta_k L/\varepsilon}.
$$
\end{lemma}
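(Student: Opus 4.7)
The plan is to extract the $k$-th Fourier mode from the expansion \eqref{letube} at a vertical slice $\{x\}\times (-\varepsilon, \varepsilon)$ with $x<L$, and then take $x\to L^-$. First, for $x\in (\delta, L-\delta)$ the series \eqref{letube} converges in $H^2$ on a neighborhood of the slice, hence in $L^2$ on the slice itself by the standard trace theorem, so the orthonormality of $(\psi_k)_{k\geq 1}$ in $L^2(-\varepsilon,\varepsilon)$ gives
\begin{equation*}
\int_{-\varepsilon}^\varepsilon u_\varepsilon(x,y)\,\psi_k(y)\,dy \;=\; a_{k,+} e^{\theta_k x/\varepsilon} + a_{k,-} e^{-\theta_k x/\varepsilon}.
\end{equation*}

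The second step is to let $x\to L^-$ in this identity. The right-hand side is a fixed linear combination of two exponentials in $x$, so it is continuous at $x=L$ with limit $a_{k,+} e^{\theta_k L/\varepsilon} + a_{k,-} e^{-\theta_k L/\varepsilon}$. For the left-hand side, I would use the fact that the open aperture $\{L\}\times (-\varepsilon,\varepsilon)$ lies in the interior of $\Omega(\varepsilon)$, so by elliptic regularity $u_\varepsilon$ is smooth across it; combined with the $H^2$-regularity up to the corners $(L,\pm\varepsilon)$ recorded above \eqref{H3demi} (applied on both sides of the aperture) and the Dirichlet condition on the walls of $\mathcal{T}(\varepsilon)$, one has uniform convergence $u_\varepsilon(x,\cdot)\to u_\varepsilon(L,\cdot)$ on $[-\varepsilon,\varepsilon]$ as $x\to L^-$. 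Passing to the limit inside the integral then produces $\int_{-\varepsilon}^\varepsilon u_\varepsilon(L,y)\psi_k(y)\,dy = C_k$ by the very definition of $C_k$.

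Matching the two limits yields the claimed formula. No real obstacle is expected: the only mildly delicate point is verifying that the limit of the slice integrals recovers the Fourier coefficient $C_k$ of the boundary trace, and this reduces entirely to continuity of $u_\varepsilon$ up to the closed aperture, which is automatic since the open aperture is interior to $\Omega(\varepsilon)$ and $u_\varepsilon$ vanishes continuously at the two corners by the Dirichlet condition on the adjacent walls.
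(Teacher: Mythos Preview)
Your overall strategy coincides with the paper's: identify the $k$-th Fourier coefficient of $u_\varepsilon(x,\cdot)$ for $x<L$ from \eqref{letube}, then pass to the limit $x\to L^-$. The only substantive difference is how you justify
\[
\int_{-\varepsilon}^\varepsilon u_\varepsilon(x,y)\,\psi_k(y)\,dy \;\longrightarrow\; C_k .
\]
You argue via \emph{uniform} convergence of $u_\varepsilon(x,\cdot)$ on $[-\varepsilon,\varepsilon]$, invoking ``$H^2$-regularity up to the corners $(L,\pm\varepsilon)$''. This is the shaky spot: those corners are re-entrant for $\Omega(\varepsilon)$ with interior angle $3\pi/2$, so the leading singular part of $u_\varepsilon$ there behaves like $r^{2/3}$, and $u_\varepsilon$ is in general \emph{not} $H^2$ in a full two-sided neighborhood of the corners. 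The text you cite only asserts $H^2$ on $[L,L']\times[-1,1]$, i.e.\ on the right of the aperture, and does not yield the uniform control from the left that you need. Continuity of $u_\varepsilon$ up to the corners (hence uniform convergence) does hold, but it requires corner-regularity theory rather than Sobolev embedding from $H^2$; as written, your justification has a gap.

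The paper sidesteps all of this: it proves $\|u_\varepsilon(L,\cdot)-u_\varepsilon(x,\cdot)\|_{L^2(-\varepsilon,\varepsilon)}\to 0$ by writing $u_\varepsilon(L,y)-u_\varepsilon(x,y)=\int_x^L\partial_x u_\varepsilon(t,y)\,dt$ and applying Cauchy--Schwarz, so that the left-hand side is bounded by $\sqrt{L-x}\,\|\partial_x u_\varepsilon\|_{L^2((0,L)\times(-\varepsilon,\varepsilon))}$. This uses only that $u_\varepsilon\in H^1$ on the tube, which is immediate, and gives the $L^2$ convergence needed to pass to the limit in the Fourier coefficient. It is both shorter and avoids any corner subtleties.
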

\begin{proof}
By (\ref{letube}), it is enough to prove that the quantity,
$$
\Vert u_\varepsilon (L,\cdot) -u_\varepsilon (x,\cdot)\Vert_{L^2(-\varepsilon, \varepsilon)}
$$
tends to 0 as $x\to L_-$. This is probably a well known fact, but let us recall the proof. For $x<L$ and $|y|<\varepsilon$, we write,
$$
u_\varepsilon (L,y) -u_\varepsilon (x,y)=\int_x^L\partial_xu_\varepsilon (t,y)dt,
$$
and thus, by Cauchy-Schwarz inequality,
$$
\Vert u_\varepsilon (L,\cdot) -u_\varepsilon (x,\cdot)\Vert_{L^2(-\varepsilon, \varepsilon)}\leq \sqrt{(L-x)}\Vert \partial_xu_\varepsilon\Vert_{L^2((0,L)\times (\varepsilon, \varepsilon))}.
$$
Denoting by $u_\varepsilon^\mu (x,y):=u_\varepsilon (x+i\mu f(x),y)$ the function obtained by distorting $u_\varepsilon$ in the complex in $x$ , we also have,
$$
\Vert \partial_xu_\varepsilon\Vert_{L^2((0,L)\times (\varepsilon, \varepsilon))}=\Vert \partial_xu^\mu_\varepsilon\Vert_{L^2((0,L)\times (\varepsilon, \varepsilon))}\leq \Vert \nabla u^\mu_\varepsilon\Vert_{L^2(\Omega(\varepsilon))}={\mathcal O}(1),
$$
and the result follows.
\end{proof}

Therefore, for $|y|<\varepsilon$, we have proved,
\be
\label{match1}
u_\varepsilon (L,y)=\sum_{k\geq 1}\left( a_{k,+} e^{\theta_k L/\varepsilon}+a_{k,-} e^{-\theta_k L/\varepsilon}\right)\psi_k(y),
\ee
and
\be
\sum_{k\geq 1}k^3\left| a_{k,+} e^{\theta_k L/\varepsilon}+a_{k,-} e^{-\theta_k L/\varepsilon}\right|^2 <\infty.
\ee

In the same way, by taking the limit $x\to L_+$, for $|y|<1$ we also obtain,
\be
u_\varepsilon (L,y)=\sum_{j\geq 1}b_j\varphi_j (y),
\ee
and,
\be
\sum_{j\geq 1}j^3|b_j|^2<\infty.
\ee

Similar arguments can be performed for the derivative $\partial_x u_\varepsilon \in H^1(\Omega(\varepsilon)\cap \{|y|<L'\}) $, and they lead to,
\be
\partial_x u_\varepsilon (L,y) = \frac1{\varepsilon}\sum_{k\geq 1}\theta_k\left( a_{k,+} e^{\theta_k L/\varepsilon}-a_{k,-} e^{-\theta_k L/\varepsilon}\right)\psi_k(y)\, \mbox{ in } H^{1/2}(|y|\leq\varepsilon);
\ee
and
\begin{eqnarray}
 \partial_x u_\varepsilon (L,y) = i\sum_{j\leq j_0}( \sqrt{\rho -\alpha_j^2 })b_j\varphi_j(y) -\sum_{j>j_0}(\sqrt{\alpha_j^2 - \rho })b_j \varphi_j (y)\nonumber\\ 
 \label{match2}\, \mbox{ in } H^{1/2}(|y|\leq 1).
\end{eqnarray}

\section{Estimates on the coefficients }

In this section, taking advantage of the two previous representations of $u_\varepsilon$ at the aperture, we compute in two different ways the three following quantities:

$$
\la u_\varepsilon, \partial_x u_\varepsilon\ra_{\{L\}\times[-1,1]}\, ;\, \la u_\varepsilon,\varphi_1\ra_{\{L\}\times[-1,1]}\, ;\, \la\partial_x u_\varepsilon,\psi_1\ra_{\{L\}\times[-\varepsilon,\varepsilon]}.
$$

The resulting identities will permit us to give a lower bound on $\displaystyle\sum_{j\leq j_0}|b_j|^2$ in terms of $|a_{1,-}|$ and to conclude by using an argument from \cite{BHM}.

From now on, we set,
$$
A_{k,\pm}:= a_{k,\pm}e^{\pm \theta_k L/\varepsilon}.
$$

Since  $u_\varepsilon (L,y)$  vanishes identically on $\{ |y| >\varepsilon\}$, in view of (\ref{match1})-(\ref{match2}), the two computations of  $\la u_\varepsilon, \partial_x u_\varepsilon\ra_{\{L\}\times[-1,1]}$ give the identity,
\begin{eqnarray*}
&& \frac1{\varepsilon}\sum_{k\geq 1}\theta_k(|A_{k,+}|^2-|A_{k,-}|^2+2i\im (A_{k,+}\overline{A}_{k,-}))\\
&&\hskip 4cm = \sum_{j\leq j_0} i( \sqrt{\rho -\alpha_j^2 })|b_j|^2-\sum_{j\geq j_0+1}(\sqrt{\alpha_j^2 - \rho })|b_j|^2.
\end{eqnarray*}

Then, using the fact that $\re\theta_k \sim k$ as $k\to\infty$, while $|\im\theta_k|={\mathcal O}(k^{-1}e^{-\delta /\varepsilon})$ and $|\im \sqrt{\rho -\alpha_j^2}|={\mathcal O}(e^{-\delta /\varepsilon})$ for some constant $\delta >0$, and all $j\leq j_0$ by taking the real part we deduce,

\begin{eqnarray*}
&& \frac1{\varepsilon}\sum_{k\geq 1}(\re\theta_k)(|A_{k,+}|^2-|A_{k,-}|^2) +\frac1{\varepsilon}\sum_{k\geq 1}{\mathcal O}(k^{-1}e^{-\delta/\varepsilon}) |A_{k,+}A_{k,-}|)\\
&&\hskip 4cm = {\mathcal O}(e^{-\delta /\varepsilon})\sum_{j\leq j_0} |b_j|^2-\sum_{j\geq j_0+1}(\re\sqrt{\alpha_j^2 - \rho })|b_{j}|^2.
\end{eqnarray*}

In particular, since $\re\sqrt{\alpha_j^2 - \rho }= \frac{\pi  j}2(1+{\mathcal O}(\varepsilon^2 j^{-2}))$, we see that there exists a constant $C>0$ such that,

\begin{eqnarray}\displaystyle
\label{est1}
 && \sum_{k\geq 1}\re\theta_k(|A_{k,+}|^2-|A_{k,-}|^2) 
\leq Ce^{-\delta /\varepsilon}\sum_{j\leq j_0} |b_j|^2\\
 && \nonumber\hskip 1cm +C\sum_{k\geq 1}k^{-1}e^{-\delta/\varepsilon} |A_{k,+}A_{k,-}| 
-\frac{\pi}2\varepsilon \sum_{j\geq j_0+1} j(1-C\varepsilon^2j^{-2})|b_{j}|^2.
\end{eqnarray}

Moreover, by  (\ref{a-}), we see that,
\be
\label{Ak2}
\sum_{k\geq 2} k| A_{k,-}|^2={\mathcal O}(\varepsilon^{-1/2}e^{-2\pi L/\varepsilon}),
\ee
and we also know from (\ref{estb})
$\sum_{j\leq j_0}|b_{j}|^2 ={\mathcal O}(e^{(\delta' - \pi L)/\varepsilon})$ for any $\delta'>0$.
Therefore, we deduce from (\ref{est1})(with other positive constants $C,\delta$), 
\begin{eqnarray}
\label{est1'}
 && \sum_{k\geq 1}(k-Ck^{-1}e^{-\delta/\varepsilon})|A_{k,+}|^2  \\
 \nonumber
 && \leq  (1+Ce^{-\delta /\varepsilon}) |A_{1,-}|^2+ Ce^{-(\pi L +\delta) /\varepsilon}-\varepsilon\sum_{j\geq j_0+1} j(1-C\varepsilon^2j^{-2})|b_{j}|^2;
\end{eqnarray}

Now,  computing the scalar products   $\la u_\varepsilon (L, \cdot),\varphi_1\ra$  and $\la \partial_xu_\varepsilon (L, \cdot), \psi_1\ra_{L^2(|y|<\varepsilon)}$ in two different ways (by using (\ref{match1})-(\ref{match2}) and the fact that $u_\varepsilon (L,y)=0$ on $\{\varepsilon < |y| <1\}$), we find,
\begin{eqnarray*}
&&\sum_{k\geq 1} \mu_k(A_{k,+}+A_{k,-}) =b_{1};\\
&& \frac1{\varepsilon}\theta_1(A_{1,+}-A_{1,-}) =\sum_{j\leq j_0} i\nu_j( \sqrt{\rho -\alpha_j^2 })b_{j} -\sum_{j\geq j_0+1}\nu_j(\sqrt{\alpha_j^2 - \rho })b_{j},
\end{eqnarray*}
with,
$$
\mu_k:=\int_{-\varepsilon}^\varepsilon \psi_k(y)\varphi_1(y)dy =\left\{\begin{array}{l}
0 \mbox{ if $k$ is even};\\
(-1)^{\frac{k-1}2}\frac{4k\sqrt\varepsilon}{\pi (k^2-\varepsilon^2)} \cos\frac{\varepsilon\pi}2 \, \mbox{ if $k$ is odd},
\end{array}\right.
$$
and,
$$
\nu_j:= \int_{-\varepsilon}^\varepsilon \varphi_j(y)\psi_1(y)dy =\left\{\begin{array}{l}
0 \mbox{ if $j$ is even};\\
\frac{4\sqrt{\varepsilon}\sin((\varepsilon j -1)\pi/2)}{\pi (\varepsilon^2j^2-1)} \mbox{ if $j\not= \frac1{\varepsilon}$ is odd};\\
\sqrt{\varepsilon} \mbox{ if $j=\frac1{\varepsilon}$ is odd}.
\end{array}\right.
$$
In particular, using the fact that  $|\sin t|\leq \min (|t|, 1)$,
$$
\nu_j\leq \frac{4 \sqrt{\varepsilon}}{\pi(\varepsilon j +1)}\min(\frac{\pi}2, \frac1{|\varepsilon j -1|}),
$$
 and thus, using again (\ref{Ak2}), we obtain,
\begin{eqnarray}
\label{A+B}
&& |A_{1,+} + A_{1,-}| \leq \frac{C_0}{\sqrt{\varepsilon}}|b_{1}| + \sum_{k\geq 2}|\frac{\mu_k }{\mu_1}A_{k,+}| +\frac{C_0}{\sqrt{\varepsilon}}e^{-\pi L/\varepsilon};\\
\label{A-B}
&& | A_{1,+} - A_{1,-}| \leq  C_0\varepsilon^{\frac32}\sum_{j\leq j_0} |b_{j}|  +\frac{C_0}{\sqrt{\varepsilon}}e^{-\pi L/\varepsilon}\\
 &&\hskip 2cm \nonumber+\frac{4\varepsilon^{\frac32}}{\pi |\theta_1|} \sum_{j\geq j_0+1}\frac{|\alpha_j|}{\varepsilon j+1}\min(\frac{\pi}2,\frac1{|\varepsilon j-1|}) |b_{j}|,
\end{eqnarray}
with some constant $C_0>0$.

Then, we observe that $|\mu_k/\mu_1|\leq (k-\varepsilon^2)^{-1}$, and thus, by (\ref{est1'}),
\begin{eqnarray}\nonumber
\sum_{k\geq 2}|\frac{\mu_k }{\mu_1}A_{k,+}|\leq \left(\sum_{k\geq 2}\frac1{(k-\varepsilon^2)^2}\right)^{\frac12}\left(\sum_{k\geq 2}|A_{k,+}|^2\right)^{\frac12}\\
\label{estsom1}
\leq \tau_1  \left(|A_{1,-}|^2-\varepsilon\sum_{j\geq j_0+1}j|b_{j}|^2\right)^{\frac12} + Ce^{-(\pi L +\delta) /2\varepsilon},
\end{eqnarray}
where $\tau_1$ can be taken arbitrarily close to $\frac12(\sum_{k\geq 2}k^{-2})^{\frac12}\leq \frac{\sqrt{3}}4<\frac12$. Inserting (\ref{estsom1}) into (\ref{A+B}), we obtain,
\begin{eqnarray}
\label{A+B1}
&&|A_{1,+} + A_{1,-}| \leq \tau_1   \left(|A_{1,-}|^2-\varepsilon\sum_{j\geq j_0+1}j|b_{j}|^2\right)^{\frac12}\\ &&\hskip 1cm \nonumber
+ \frac{C_0}{\sqrt{\varepsilon}}|b_{1}|  +2Ce^{-(\pi L +\delta) /2\varepsilon}.
\end{eqnarray}

On the other hand, setting $\gamma_0:=1+\frac2{\pi}\sim 1.637$, and using the fact that
 $|\alpha_j|\leq j\pi/2$ while $|\theta_1| =\frac{\pi}2 (1+{\mathcal O}(\varepsilon^2)$, we have,
\begin{eqnarray*}
 \frac{4\varepsilon^{\frac32}}{\pi |\theta_1|} \sum_{j\geq j_0+1}\frac{|\alpha_j|}{\varepsilon j+1}\min(\frac{\pi}2,\frac1{|\varepsilon j-1|}) |b_{j}|\leq 
 (1+C\varepsilon^2) \sqrt{\varepsilon}\times\\
 (2\sum_{j_0+1\leq j\leq\gamma_0/\varepsilon}\frac{\varepsilon j}{\varepsilon j +1}|b_{j}|
 + \frac4{\pi}\sum_{j\geq\gamma_0/\varepsilon}\frac{\varepsilon j}{\varepsilon^2 j^2-1}|b_{j}|).
\end{eqnarray*}
Therefore, by the Cauchy-Schwarz inequality,
\begin{eqnarray}
\nonumber
 && \frac{4\varepsilon^{\frac32}}{\pi |\theta_1|} \sum_{j\geq j_0+1}\frac{|\alpha_j|}{\varepsilon j+1}\min(\frac{\pi}2,\frac1{|\varepsilon j-1|}) |b_{j}|\\
 \label{A-B2}
 && \hskip 3cm \leq (1+C\varepsilon^2)\sqrt{ 4\Gamma_1+\frac 6 {\pi^2} \Gamma_2}\left(\varepsilon\sum_{j\geq j_0+1}j|b_{j}|^2\right)^{\frac12},
 \end{eqnarray}
 with,
 $$
 \Gamma_1:=\sum_{j_0+1\leq j\leq\gamma_0/\varepsilon}\frac{\varepsilon^2 j}{(\varepsilon j +1)^2},
\mbox{ and }
 \Gamma_2:=\sum_{j\geq\gamma_0/\varepsilon}\frac{\varepsilon^2 j}{(\varepsilon^2 j^2-1)^2}.
 $$
As $\varepsilon \to 0$, we see that $\Gamma_1$ tends to 
$$
I_1:=\int_0^{\gamma_0}\frac{tdt}{(t+1)^2} = \ln (1+\gamma_0)-\frac{\gamma_0}{1+\gamma_0}\sim 0.97 -0.62=0.35,
$$
while $\Gamma_2$ tends to
$$
I_2:=\int_{\gamma_0}^\infty \frac{tdt}{(t^2-1)^2}=-\frac12 \left[\frac1{t^2-1}\right]_{\gamma_0}^\infty=\frac1{2(\gamma_0^2-1)}\sim 0.298.
$$
Therefore, we deduce from (\ref{A-B}) and (\ref{A-B2}),
\be
\label{A-B1}
 | A_{1,+} - A_{1,-}| \leq  C_0\varepsilon^{\frac32} \sum_{j\leq j_0} |b_{j}|  +\tau_2 \left(\varepsilon\sum_{j\geq j_0+1}j|b_{j,-}|^2\right)^{\frac12}+\frac{C_0}{\sqrt{\varepsilon}}e^{-\pi L/\varepsilon},
\ee
where $\tau_2$ can be taken arbitrarily close to 
$$\sqrt{ 4\Gamma_1+\frac 6 {\pi^2} \Gamma_2} \leq 1.6.$$
 Summing (\ref{A+B1}) with (\ref{A-B1}), and using the triangle inequality, we finally obtain,
\be
\label{estA1}
2|A_{1,-}|\leq \tau_1   \sqrt{|A_{1,-}|^2-X}+\tau_2\sqrt{X}+\sum_{j\leq j_0} \frac{2C}{\sqrt{\varepsilon}}|b_{j}|  +3Ce^{-(\pi L +\delta) /2\varepsilon},
\ee
where we have set,
$$
X:=\varepsilon\sum_{j\geq j_0+1}j|b_{j}|^2.
$$
Now, an elementary computation shows that the map,
$$
[0,|A_{1,-}|^2]\ni X\mapsto \tau_1   \sqrt{|A_{1,-}|^2-X}+\tau_2\sqrt{X}
$$
 reaches its maximum at $X=\frac{\tau_2^2}{\tau_1^2+\tau_2^2}|A_{1,-}|$, and the maximum value is
 $$
 (\sqrt{\tau_1^2+\tau_2^2})|A_{1,-}|.
 $$
  Therefore, we deduce from (\ref{estA1}),
  \be
  2|A_{1,-}|\leq  (\sqrt{\tau_1^2+\tau_2^2})|A_{1,-}|+\sum_{j\leq j_0} \frac{2C}{\sqrt{\varepsilon}}|b_{j}|   +3Ce^{-(\pi L +\delta) /2\varepsilon},
  \ee
Since $\tau_1^2+\tau_2^2\leq 3 <4$, we have proved,
\begin{proposition}\label{atob}\sl
There exist two constants $C, \delta >0$ such that, for any $\varepsilon >0$ small enough, one has,
$$
|A_{1,-}|\leq \frac{C}{\sqrt\varepsilon}\sum_{j\leq j_0} |b_{j}|+Ce^{-(\pi L +\delta) /\varepsilon}.
$$
\end{proposition}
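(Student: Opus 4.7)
The plan is to combine three matching identities at the aperture $\{x=L\}$ — obtained by computing $\langle u_\varepsilon, \partial_x u_\varepsilon\rangle$, $\langle u_\varepsilon, \varphi_1\rangle$ and $\langle \partial_x u_\varepsilon, \psi_1\rangle$ in two different ways — into a single scalar inequality on $|A_{1,-}|$ whose right-hand side can be absorbed into the left with strict margin.

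I will first exploit the identity coming from $\langle u_\varepsilon, \partial_x u_\varepsilon\rangle$. Taking real parts and using the spectral asymptotics $\re\theta_k\sim k$ and $\re\sqrt{\alpha_j^2-\rho}\sim \pi j/2$, together with the already known bounds $\sum_{k\geq 2} k|A_{k,-}|^2 = \mathcal{O}(\varepsilon^{-1/2} e^{-2\pi L/\varepsilon})$ and $\sum_{j\leq j_0}|b_j|^2 = \mathcal{O}(e^{(\delta'-\pi L)/\varepsilon})$, this produces a reservoir estimate
$$\sum_{k\geq 1} k|A_{k,+}|^2 + \varepsilon\sum_{j > j_0} j|b_j|^2 \leq C|A_{1,-}|^2 + C e^{-(\pi L+\delta)/\varepsilon},$$
which will later absorb tail contributions in both the $A_{k,+}$'s and the $b_j$'s.

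The second and third identities express $A_{1,+}+A_{1,-}$ and $A_{1,+}-A_{1,-}$ in terms of the $b_j$'s, modulated by the explicit overlap coefficients $\mu_k=\langle \psi_k,\varphi_1\rangle$ and $\nu_j=\langle \varphi_j,\psi_1\rangle$. For the former, the tail $\sum_{k\geq 2}(\mu_k/\mu_1) A_{k,+}$ is controlled via Cauchy--Schwarz and the reservoir, producing a constant $\tau_1$ arbitrarily close to $\tfrac12(\sum_{k\geq 2} k^{-2})^{1/2}$. For the latter, $\nu_j$ is nearly singular at $j\sim 1/\varepsilon$; I will split the $j$-sum at the critical index $j\sim \gamma_0/\varepsilon$ with $\gamma_0:=1+2/\pi$, use $|\sin t|\leq \min(|t|,1)$ on each piece, and apply Cauchy--Schwarz, obtaining a constant $\tau_2$ expressible through Riemann sums converging to integrals $I_1$ and $I_2$.

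Summing these two bounds via the triangle inequality and optimizing in $X:=\varepsilon\sum_{j > j_0} j|b_j|^2$, the elementary maximum
$$\max_{0\leq X\leq |A_{1,-}|^2}\bigl(\tau_1\sqrt{|A_{1,-}|^2-X}+\tau_2\sqrt{X}\bigr)=\sqrt{\tau_1^2+\tau_2^2}\,|A_{1,-}|$$
yields $2|A_{1,-}|\leq \sqrt{\tau_1^2+\tau_2^2}\,|A_{1,-}|+(C/\sqrt\varepsilon)\sum_{j\leq j_0}|b_j|+\mathcal{O}(e^{-(\pi L+\delta)/(2\varepsilon)})$. The main obstacle, on which the whole strategy depends, is the quantitative check $\tau_1^2+\tau_2^2 < 4$: the splitting point $\gamma_0$ must be tuned to keep $4I_1+(6/\pi^2)I_2$ below $(4-\tau_1^2)$, and with $\gamma_0=1+2/\pi$ one computes $\tau_2\leq 1.6$ and $\tau_1\leq \sqrt 3/4$, giving $\tau_1^2+\tau_2^2\leq 3<4$. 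This strict inequality permits absorption and yields the claimed bound.
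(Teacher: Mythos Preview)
Your proposal is correct and follows essentially the same route as the paper's proof: the same three matching identities at $\{x=L\}$, the same ``reservoir'' inequality (est1') obtained from the real part of $\langle u_\varepsilon,\partial_x u_\varepsilon\rangle$, the same Cauchy--Schwarz controls producing constants $\tau_1$ and $\tau_2$ (with the identical splitting point $\gamma_0=1+2/\pi$ and the same integrals $I_1,I_2$), the same optimization in $X$, and the same numerical check $\tau_1^2+\tau_2^2\leq 3<4$. The only cosmetic discrepancy is that your reservoir is stated with a generic constant $C$ in front of $|A_{1,-}|^2$, whereas the argument actually requires this constant to be $1+o(1)$ (as in the paper's (7.3)) in order to get $\tau_1$ close to $\tfrac12(\sum_{k\geq2}k^{-2})^{1/2}$; since you invoke this precise value for $\tau_1$ afterwards, you are implicitly using the sharp form.
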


\section{ Completion of the proof}

We first observe,
 \begin{proposition}\sl \label{imrho}There exists a constant $C_0>0$, such that,
$$|\im(\rho)|\geq \frac1{C_0}(\sum_{j=1}^{j_0} |b_j |)^2,$$ 
for all $\varepsilon>0$ small enough. 
\end{proposition}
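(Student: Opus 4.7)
The plan is to start from the Green-formula identity (\ref{green}), substitute the outgoing representation (\ref{uext}) at the section $\{x=L+1\}$, and read off the imaginary part of the resulting series. Using the orthonormality of $(\varphi_j)_{j\geq 1}$ in $L^2(-1,1)$, the boundary integral on the right-hand side of (\ref{green}) collapses to $\sum_{j\geq 1} v_j(L+1)\overline{v_j'(L+1)}$, with $v_j$ given by (\ref{vj}) together with the outgoing/decay conditions $b_{j,-}=0$ for $j\leq j_0$ and $b_{j,+}=0$ for $j\geq j_0+1$. A direct computation then shows that the open modes ($j\leq j_0$) contribute $-\re\sqrt{\rho-\alpha_j^2}\,|b_j|^2 e^{-2\im\sqrt{\rho-\alpha_j^2}}$ to $\im\int$, while the closed modes ($j\geq j_0+1$) contribute $+\im\sqrt{\alpha_j^2-\rho}\,|b_j|^2 e^{-2\re\sqrt{\alpha_j^2-\rho}}$.

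By Assumption {\bf (H)} and the choice of $j_0$, for $\varepsilon$ small each quantity $\re\sqrt{\rho-\alpha_j^2}$ with $j\leq j_0$ is bounded below by a strictly positive constant independent of $\varepsilon$, while $\im\sqrt{\rho-\alpha_j^2}=O(|\im\rho|)$ is negligible. Consequently the open-mode part of $-\im\int$ is bounded below by $c\sum_{j\leq j_0}|b_j|^2$ for some $c>0$.

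For the closed modes, the first-order expansion $\im\sqrt{\alpha_j^2-\rho}=-\im\rho/(2\re\sqrt{\alpha_j^2-\rho})+O(|\im\rho|^2)$ exhibits each term as $O(|\im\rho|)\cdot|b_j|^2 e^{-2\re\sqrt{\alpha_j^2-\rho}}$. Noticing that $|b_j|^2 e^{-2\re\sqrt{\alpha_j^2-\rho}}=|v_j(L+1)|^2$, Parseval combined with the trace inequality and (\ref{upetit}) yields
\[
\sum_{j\geq j_0+1}|b_j|^2 e^{-2\re\sqrt{\alpha_j^2-\rho}}\leq C\|u_\varepsilon\|_{H^1((L,L+1)\times(-1,1))}^2 = O(e^{(2\delta-\pi L)/\varepsilon}).
\]
The entire closed-mode contribution to $\im\int$ is therefore $|\im\rho|$ times an exponentially small factor, and can be absorbed into the left-hand side of (\ref{green}).

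Putting all this together will give $|\im\rho|\geq c'\sum_{j\leq j_0}|b_j|^2$ for some $c'>0$ and $\varepsilon$ small enough, and the proposition then follows from the Cauchy-Schwarz inequality $\bigl(\sum_{j=1}^{j_0}|b_j|\bigr)^2\leq j_0\sum_{j=1}^{j_0}|b_j|^2$, with $j_0$ a fixed integer depending only on $\lambda_0$. The main obstacle is the bookkeeping for the closed-mode terms: one must exploit simultaneously their proportionality to $|\im\rho|$ (via the Taylor expansion of $\sqrt{\alpha_j^2-\rho}$) and the exponential smallness of the trace of $u_\varepsilon$ on $\{x=L+1\}$, in order to ensure that they genuinely absorb rather than contribute at the same scale as the quantity we are trying to bound from below.
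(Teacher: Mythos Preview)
Your proposal is correct and follows essentially the same route as the paper: both compute the boundary integral in (\ref{green}) via the series (\ref{uext}), bound the open-mode contribution below using $\re\sqrt{\rho-\alpha_j^2}\geq c>0$ for $j\leq j_0$, and control the closed-mode contribution by the product of $|\im\rho|$ (from the expansion of $\im\sqrt{\alpha_j^2-\rho}$) with an exponentially small factor coming from (\ref{upetit}). The only cosmetic difference is that the paper packages the closed-mode bound as the a priori estimate (\ref{estb}) from Appendix~\ref{appB}, whereas you obtain the equivalent bound directly via Parseval and the trace of $u_\varepsilon$ on $\{x=L+1\}$.
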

\begin{proof}

Let us compute $\im\big( \int_{-1}^{1} u(L+1,y)\frac {\partial\overline u(L+1,y)}{\partial x} dy \big)$ with the help of the expression (\ref{uext}). We first obtain,

\begin{eqnarray*}
&&\frac {\partial \overline u_\varepsilon}{\partial x} (x,y) = \sum_{j\leq j_0}-i\overline{ \sqrt{\rho -\alpha_j^2 } }\;b_j e^{-i(x-L) \overline{ \sqrt{\rho -\alpha_j^2 }}}\; \overline{ \varphi_1(y)} \\
&&\hskip 4cm-\sum_{j\geq j_0+1}b_je^{- (x-L)\overline{\sqrt{\alpha_j^2 - \rho }}}\;\overline{\varphi_j (y)}  \quad (x > L),
\end{eqnarray*}
and thus,
\begin{eqnarray*}
  \int_{-1}^{1} u(L+1,y)\frac {\partial\overline u(L+1,y)}{\partial x} dy &=&
-\sum_{j\leq j_0}i |b_j |^2       \overline{ \sqrt{\rho -\alpha_j^2 } } e^{-2 \, \im{ \sqrt{\rho -\alpha_j^2 }}}\\
&& \hskip 0.5cm- \sum_{j\geq j_0+1}   |b_j|^2  \;\overline{ \sqrt{\alpha_j^2 -\rho} } \;e^{-2 \re{\sqrt{\alpha_j^2 - \rho }}}.
\end{eqnarray*}

Taking the imaginary part, we obtain,
\begin{eqnarray*}
|\im { \int_{-1}^{1} u(L+1,y)\frac {\partial\overline u(L+1,y)}{\partial x} dy} |      \geq \sum_{j\leq j_0}  |b_j |^2     
  \re{ \sqrt{\rho -\alpha_j^2 } }     e^{-2  \im{ \sqrt{\rho -\alpha_j^2 } } }\\
 -  \sum_{j\geq j_0+1}   |b_j|^2   \im { \sqrt{\alpha_j^2 -\rho} }  e^{-2 \re{\sqrt{\alpha_j^2 - \rho }}}.
 \end{eqnarray*}
Now, for $j\geq j_0+1$, we have 
$| \im { \sqrt{\alpha_j^2 -\rho} } |\leq C  |\im{\rho}|$,  while, for $j\leq j_0$, there exist $c_0, C_0 >0$, such that,
$ 2c_0\leq  \re{ \sqrt{\rho -\alpha_j^2 } }   \leq C_0$, and $| \im { \sqrt{\rho -\alpha_j^2 } } |\leq C  |\im{\rho}|$.

Then, from (\ref{estb}), we obtain 
$$|\im {  \int_{-1}^{1} u(L+1,y)\frac {\partial\overline u(L+1,y)}{\partial x} dy  }|  \geq   c_0 \sum_{j\leq j_0}  |b_j |^2 
 -  e^{(\delta'-\pi L)/\varepsilon} | \im{\rho}|$$

The equation (\ref{green})  combined with the  previous estimate gives,
 $$|\im(\rho)| (1+\O(e^{(\delta'-\pi  L)/\varepsilon}))\geq c_0 \sum_{j\leq j_0} |b_j |^2 ,
 $$
and since $\sum_{j\leq j_0} |b_j |^2\geq j_0^{-2}(\sum_{j\leq j_0} |b_j |)^2$, the result follows.
\end{proof}
 
In view of Propositions \ref{atob} and \ref{imrho}, we see that it only remains to find an appropriate lower bound on $|A_{1,-}|$. This will be achieved by using an argument from \cite{BHM}.

 Indeed, by Assumption {\bf (H)}, we see that the Dirichlet eigenfunction $u_0$ satisfies the hypothesis of \cite{BHM} Lemma 3.1. Then, following the arguments of \cite{BHM} leading to (13) in that paper, and using again \cite{HM}, Proposition 3.1 and Formula (5.13), we conclude that for any $\delta >0$ and any $x\in (0,L)$, there exists $C_1$ such that the resonant state $u_\varepsilon$ verifies (see \cite{BHM}, Formula(13)),
\be\label{utube}
\|u_\varepsilon\|_{L^2([x,L]\times [-\varepsilon,\varepsilon] )}\geq C_0 \varepsilon^{4.5+\delta} e^{-\pi x/2\varepsilon}.
\ee
Thanks to this estimate, we can prove,

\begin{proposition}\sl\label{step4} For any $\delta >0$, there exists $C>0$, such that,
\begin{eqnarray}  
   |A_{1,-}| \geq   C  \varepsilon^{4.5+\delta}   e^{- \pi L/2\varepsilon},\end{eqnarray}
   for $\varepsilon >0$ small enough.
   \end{proposition}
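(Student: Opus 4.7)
The plan is to compare the $L^2$ lower bound (\ref{utube}) on $u_\varepsilon$ in the neck with the modal expansion (\ref{letube}) of $u_\varepsilon$ on the orthonormal basis $(\psi_k)$. Since $\re\theta_1=\pi/2+\mathcal{O}(\varepsilon^2)$, the coefficient $A_{1,-}$ governs the mode that grows exponentially as one moves from the mouth $x=L$ toward the cavity, so it must carry essentially all the mass of $u_\varepsilon$ inside $\mathcal{T}(\varepsilon)$. Turning this into an inequality requires an upper bound on $\|u_\varepsilon\|^2_{L^2([x,L]\times[-\varepsilon,\varepsilon])}$ in terms of $|A_{1,-}|^2$ together with exponentially small errors; comparing with (\ref{utube}) will then force $|A_{1,-}|$ to be large.

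First, by Parseval on the basis $(\psi_k)$,
$$\|u_\varepsilon\|^2_{L^2([x,L]\times[-\varepsilon,\varepsilon])} = \sum_{k\geq 1}\int_x^L\bigl|A_{k,+}e^{-\theta_k(L-t)/\varepsilon}+A_{k,-}e^{\theta_k(L-t)/\varepsilon}\bigr|^2 dt.$$
The $k=1,-$ term is, up to $\mathcal{O}(e^{-\delta/\varepsilon})$ errors coming from $\im\theta_1$, equal to $\frac{\varepsilon}{2\re\theta_1}|A_{1,-}|^2\bigl(e^{2\re\theta_1(L-x)/\varepsilon}-1\bigr)\sim\frac{\varepsilon}{\pi}|A_{1,-}|^2 e^{\pi(L-x)/\varepsilon}$. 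I would then show every other term is subdominant: the $k=1,+$ piece is bounded by $L|A_{1,+}|^2$, and (\ref{est1'}) controls $|A_{1,+}|$ by $|A_{1,-}|$ plus an exponentially small remainder; the sum of $k\geq 2,+$ contributions is similarly bounded by $L\sum_{k\geq 2}|A_{k,+}|^2$, which (\ref{est1'}) estimates by $|A_{1,-}|^2/2$ plus a small remainder; and the $k\geq 2,-$ contributions are controlled through (\ref{Ak2}) combined with the $H^2$-convergence of the series (\ref{letube}), which forces $|A_{k,-}|$ to decay fast enough to counteract the growth $e^{k\pi(L-x)/\varepsilon}$.

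Putting these bounds together yields an inequality of the shape
$$\|u_\varepsilon\|^2_{L^2([x,L]\times[-\varepsilon,\varepsilon])}\leq C\varepsilon\,|A_{1,-}|^2 e^{\pi(L-x)/\varepsilon}+Ce^{-(\pi L+\delta)/\varepsilon}.$$
Inserting this against (\ref{utube}) squared, namely $\|u_\varepsilon\|^2_{L^2([x,L]\times[-\varepsilon,\varepsilon])}\geq C_0^2\varepsilon^{9+2\delta}e^{-\pi x/\varepsilon}$, choosing $x$ fixed in $(0,L)$ so that the exponential weights match up to a single factor of $e^{-\pi L/\varepsilon}$, and rearranging, one obtains $|A_{1,-}|^2\geq C\,\varepsilon^{8+2\delta}e^{-\pi L/\varepsilon}$. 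Taking a square root and renaming $\delta$ (which is arbitrary) produces the claimed $|A_{1,-}|\geq C\varepsilon^{4.5+\delta}e^{-\pi L/(2\varepsilon)}$.

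The main technical obstacle lies in the high-mode tail, i.e.\ controlling $\sum_{k\geq 2}\tfrac{\varepsilon}{\re\theta_k}|A_{k,-}|^2 e^{2\re\theta_k(L-x)/\varepsilon}$. The bound (\ref{Ak2}) alone would permit terms that, for $k\geq 3$ and $x$ bounded away from $L$, are comparable to or larger than the $\varepsilon^{9+2\delta}$ right-hand side coming from (\ref{utube}). One must therefore extract sharper $k$-dependent decay of $|A_{k,-}|$ using the fact that $(a_{k,+}+a_{k,-})\psi_k$ must sum to a function whose restriction at $x=0$ lives in $L^2(-\varepsilon,\varepsilon)$ (equivalently, the $H^2$-convergence of (\ref{letube}) on strips away from the endpoints), or alternatively by exploiting the reference estimates from \cite{HM} on $u_\varepsilon$ inside $\mathcal{C}(\varepsilon)$ that underlie (\ref{utube}) itself. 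This is the step where the precise exponent $N_0$ in Theorem \ref{mainth} is ultimately paid for.
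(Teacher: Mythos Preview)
Your overall architecture coincides with the paper's: expand $\|u_\varepsilon\|^2_{L^2([x,L]\times[-\varepsilon,\varepsilon])}$ via (\ref{letube}), bound the $A_{k,+}$ contributions using (\ref{est1'}) (this is the paper's display (\ref{estA+})), isolate the $k=1,-$ term, and compare with (\ref{utube}) to force $|a_{1,-}|^2\geq C\varepsilon^{9+2\delta}$, hence $|A_{1,-}|\geq C\varepsilon^{4.5+\delta}e^{-\pi L/(2\varepsilon)}$.

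The one genuine gap is exactly the one you flag: the high-mode tail $\sum_{k\geq 2}|a_{k,-}|^2 e^{-2x\re\theta_k/\varepsilon}$. You are right that (\ref{Ak2}) is too weak for this, but your instinct to use a trace estimate at the \emph{cavity} end of the tube is precisely what the paper does. In Appendix~\ref{appA} the paper takes the trace of $u_\varepsilon$ at $x=c\varepsilon$ for some fixed $c>0$, uses the uniform $H^2$ bound on $u_\varepsilon$ in ${\mathcal C}\cup{\mathcal T}(\varepsilon)$, and obtains (\ref{a-}):
\[
\sum_{k\geq 1} k\,|a_{k,-}e^{-c\theta_k}|^2 = {\mathcal O}(\varepsilon^{-1/2}).
\]
This is exactly the ``sharper $k$-dependent decay'' you ask for. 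With it, for any fixed $x\in(0,L)$ and $\varepsilon$ small one writes
\[
\sum_{k\geq 2}|a_{k,-}|^2 e^{-2x\re\theta_k/\varepsilon}
= \sum_{k\geq 2}|a_{k,-}e^{-c\theta_k}|^2\, e^{-2(x/\varepsilon-c)\re\theta_k}
\leq C\varepsilon^{-1/2}\,e^{-2(x/\varepsilon-c)\re\theta_2},
\]
and since $\re\theta_2-\re\theta_1\to\pi/2$, this is bounded by $C\varepsilon^{-C}e^{-2x\re\theta_1/\varepsilon}e^{-2xC_0/\varepsilon}$ for some $C_0>0$, which is the third term in the paper's estimate (\ref{step1}). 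For fixed $x>0$ this extra factor $e^{-2xC_0/\varepsilon}$ kills the tail against the main term $\varepsilon^{9+2\delta}$, closing the argument. So your proposed fix is the right one; it just needs to be carried out at $x=c\varepsilon$ rather than at $x=0$, and it costs only a polynomial factor $\varepsilon^{-C}$, not a change in the exponential rate.
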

   \begin{proof}
The estimation (\ref{est1'}) also gives,

\begin{eqnarray}
\label{estA+}
 && \sum_{k\geq 1} |A_{k,+}|^2   \leq  (1+Ce^{-\delta /\varepsilon}) |A_{1,-}|^2+ Ce^{-(\pi L +\delta) /\varepsilon} 
\end{eqnarray}
Let compute the quantity $\|u_\varepsilon\|_{L^2([x,L]\times [-\varepsilon,\varepsilon] )}$ 
using the expression (\ref{letube}), for any $x$ fixed,

\begin{eqnarray}
\nonumber \|u_\varepsilon\|^2_{L^2([x,L]\times [-\varepsilon,\varepsilon] )}
&=&\sum_{k\geq 1} |a_{k,+}|^2 \frac {\varepsilon}{2\re{\theta_k}} 
 \big( e^{L\;2\re{\theta_k}/\varepsilon }-e^{x \;2\re{\theta_k}/\varepsilon}\big)\\
 \nonumber
 &+&\sum_{k\geq 1} 2 \re{\Big( \frac {\varepsilon a_{k,+}\overline a_{k,-} }{2i\im{\theta_k}} 
\big( e^{iL\;2\im{\theta_k}/\varepsilon}-e^{ix \;2\im{\theta_k} /\varepsilon} \big)\Big)}\\
\nonumber &+&\sum_{k\geq 1} |a_{k,-}|^2 \frac {\varepsilon}{2\re{ \theta_k}}
\big( e^{-x 2\re{\theta_k}/\varepsilon}-e^{-L 2\re{\theta_k}/\varepsilon}  \big).
\end{eqnarray}

So this give the inequality 
\begin{eqnarray}
&&\nonumber \|u_\varepsilon\|^2_{L^2([x,L]\times [-\varepsilon,\varepsilon] )}\leq 
2\sum_{k\geq 1} |A_{k,+}|^2 
+\sum_{k\geq 1} 2 |A_{k,+}|  |a_{k,-}|  e^{-L  \re{\theta_k} /\varepsilon }   \\
\nonumber 
&&+\sum_{k> 1} |a_{k,-}|^2  \varepsilon
 e^{-x \;2\re{\theta_k}/\varepsilon} 
 +|a_{1,-}|^2  \frac {\varepsilon}{2\re{ \theta_1}}
 e^{-x  \;2\re{\theta_1}/\varepsilon} .\end{eqnarray}

By the Cauchy-Schwarz inequality, 

\begin{eqnarray}
&&\nonumber \|u_\varepsilon\|^2_{L^2([x,L]\times [-\varepsilon,\varepsilon] )}\leq 
4\sum_{k\geq 1} |A_{k,+}|^2  \\
\nonumber &&+4\sum_{k> 1} |a_{k,-}|^2 
 e^{-x \;2\re{\theta_k}/\varepsilon } 
 +|a_{1,-}|^2  \varepsilon
 e^{-x\;2\re{\theta_1}/\varepsilon }. \end{eqnarray}

Using  (\ref{estA+})  and (\ref{a-}) we obtain 
\begin{eqnarray} \label{step1}
&& \|u_\varepsilon\|^2_{L^2([x,L]\times [-\varepsilon,\varepsilon] )}\leq C |a_{1,-}|^2  
 e^{-x \;2\re{\theta_1}/  \varepsilon} \\
 &&\nonumber \hskip 3cm+C \varepsilon    e^{-(\pi L +\delta) /\varepsilon} +
 C \varepsilon ^{-C} e^{-x \;2\re{\theta_1}/\varepsilon} e^{-x2C_0/\varepsilon}. \end{eqnarray}

Now using (\ref{utube})   we get 

\begin{eqnarray} \label{step3} 
 \varepsilon^{c} (1- \varepsilon ^{-c}  e^{-x2C_0/\varepsilon} -C  \varepsilon ^{-c}    e^{-(\pi (L-x) +\delta) } )  \leq  |a_{1,-}|^2  
\end{eqnarray}
with $c:=9+2\delta$.
Thus, for $\varepsilon$ small enough, we obtain
 \begin{eqnarray}  
C  \varepsilon^{c}   \leq  |a_{1,-}|^2  .
\end{eqnarray}
\end{proof}

Combining the results of Propositions \ref{atob}, \ref{imrho} and \ref{step4}, our main result Theorem \ref{mainth} follows.

\appendix

\section{}
\label{appA}

Using the equation
$P_\varepsilon u_{\varepsilon} =\rho(\varepsilon) u_\varepsilon,$
 we obtain,
$$
\|u_\varepsilon \|_{L^2({\mathcal {C}\cup \mathcal {T}(\varepsilon)})} + \|\Delta u_\varepsilon \|_{L^2({\mathcal {C}\cup \mathcal {T}(\varepsilon) })}=\O(1),
$$
 uniformly in $\varepsilon$.

Since $u_{\varepsilon}\in H^2(\mathcal {C}\cup \mathcal {T}(\varepsilon))$ the trace theorem  applies at  $x=c\varepsilon$ with $c>0$ sufficiently large, and a   scaling proves that,

$$\|u_\varepsilon(c\varepsilon,y)\|_{H^{1/2}(-\varepsilon,\varepsilon)}= \O(\varepsilon^{-1/2})$$
$$\|\frac {\partial u_\varepsilon}{\partial x} (c\varepsilon,y)\|_{L^2(-\varepsilon,\varepsilon)}= \O( 1).$$

In particular,
$$
\Vert(\mathcal{L}(D_y)+1)^{1/4}u_\varepsilon (c\varepsilon,y) \Vert_{L^2}=\O(\varepsilon^{-1/2}).
$$
and, using the same argument as for (\ref{Ckl3demi}), we
 easily conclude, 
\be \nonumber  \sum_{k\geq 1}  k | a_{k,+} e^{c\theta_k} + a_{k,-}e^{-c\theta_k}|^2 =\O( \varepsilon ^{-1/2} );\ee
\be \nonumber   \sum_{k\geq 1} k^2  |a_{k,+}e^{c\theta_k} - a_{k,-}  e^{-c\theta_k}  |^2 =\O( 1).\ee

We deduce,
\be \label{a+}\sum_{k\geq 1} k |a_{k,+} e^{c\theta_k} |^2 =\O( \varepsilon ^{-1/2} ), \ee
and 
\be\label{a-}\sum_{k\geq 1}  k  |a_{k,-} e^{-c\theta_k} |^2 =\O( \varepsilon ^{-1/2} ).\ee

\section{}
\label{appB}

This appendix is devoted to the proof of the estimate,

\begin{eqnarray} \label{estb}\sum_{j\leq j_0} |b_j|^2+ \sum_{j\geq j_0+1} |b_j|^2 e^{ -\re(\sqrt{\alpha_j^2 - \rho }) }=\O(e^{(\delta-\pi  L)/\varepsilon}).\end{eqnarray}

Using the expression (\ref{uext}), we compute, 
\begin{eqnarray*}
\|u_\epsilon\|^2_{L^2((L,L+1)\times (-1,1))}
&=& \sum_{j\leq j_0} |b_j|^2 \int_{(L,L+1)} e^{-2(x-L)\im(\sqrt{\rho -\alpha_j^2 }) } dx \\
&&\hskip .3cm+\sum_{j\geq j_0+1}|b_j|^2 \int_{(L,L+1)} e^{-2 (x-L)\re(\sqrt{\alpha_j^2 - \rho }) }dx,
\end{eqnarray*}
and thus,
\begin{eqnarray*}\|u_\epsilon\|^2_{L^2((L,L+1)\times (-1,1))}&\geq &\frac1{C}\sum_{j\leq j_0} |b_j|^2+\frac1{C}\sum_{j\geq j_0+1} |b_j|^2 e^{ -\re(\sqrt{\alpha_j^2 - \rho }) },
\end{eqnarray*}
for some positive constant $C$.
With the inequality (\ref{upetit}), this gives (\ref{estb}).


\begin{thebibliography}{10}
 
 \bibitem[A]{A} 
 R.~ A.~Adams.
 \newblock{\em Sobolev Spaces.}
 \newblock Academic Press, Boston,1975.

\bibitem[BHM]{BHM}
R.M~Brown, P.D.~Hislop and A.~Martinez.
\newblock {L}ower {B}ounds on {E}igenfunctions and the first {E}igenvalue {G}ap.
\newblock {\em Differential Equations with Applications to Mathematical Physics}
\newblock  W.F.Ames, E.M.Harell, J.V.Herod (Ed.), Mathematical and Science in Engineering, Vol. 192, Academic Press 1993.
 22: 269--279, 1971.
 
 \bibitem[Bu]{Bu} 
 N.~Burq.
 \newblock  Lower bounds for shape resonances widths of long range Schr\"odinger operators, 
 \newblock {\em Am. J. Math. }, 124, 2002.
 
  \bibitem[CP]{CP} 
  J.~Chazarain, A.~Piriou.
  \newblock {\em Introduction \`a la Th\'eorie des \'Equations aux D\'eriv\'ees Partielles Lin\'eaires.} \newblock Gauthier-Villars, 1981.
 
  \bibitem[FLM]{FLM} 
  S.~Fujiie, A.~Lahamar-Benbernou, A.~ Martinez.
  \newblock { Width of shape resonances for non globally analytic potentials.}
  \newblock{\em J. Math. Soc. Japan Volume} 63, Number 1, 1-78, 2011. 
  
 \bibitem[FL]{FL} 
 C.~Fernandez, R.~Lavine.
 \newblock { Lower bounds for resonance width in potential and obstacle scattering.}
 \newblock{\em Comm. Math. Phys.,} 128, 263-284,1990.
 
 \bibitem[Ha]{Ha} 
 E.~Harrel.
 \newblock {General lower bounds for resonances in one dimension.}
 \newblock{ \em Comm. math. Phys.} 86, 221-225,1982.
 
 \bibitem[HS]{HS} 
 B.~Helffer, J.~ Sj\"ostrand.
 \newblock { R\'esonances en limite semiclassique.}
 \newblock{\em  Bull. Soc. Math. France, M\'emoire }{\bf 24/25}, 1986.

\bibitem[HeM]{HeM}
B.~Helffer, A.~Martinez. 
\newblock {\sl Comparaison entre les diverses notions de r\'esonances.}
\newblock{ \em Helv. Phys. Acta,} Vol.60, p.992-1003, 1987.

\bibitem[HM]{HM}
P. D.~Hislop and A.~Martinez. 
\newblock Scattering resonances of a Helmholtz resonator.
\newblock{\em  Indiana Univ. Math. J. }40 no. 2, 767-788, 1991.

\bibitem[SZ]{SZ}  
J.~Sj\"ostrand and M.~Zworski.
\newblock Complex scaling and the distribution of scattering poles.
\newblock {\em J. Amer. Math. Soc.}, 4:729--769, 1991.
\end{thebibliography}
\end{document}